\newtheorem{theorem}{Theorem}[section]
\theoremstyle{definition}
\newtheorem{definition}[theorem]{Definition}
\newtheorem{example}[theorem]{Example}
\newtheorem{proposition}[theorem]{Proposition}
\newtheorem{corollary}[theorem]{Corollary}
\theoremstyle{remark}
\newtheorem{remark}[theorem]{Remark}
\numberwithin{equation}{section}
\newcommand{\R}{\ensuremath{\mathbb{R}}}
\newcommand{\N}{\ensuremath{\mathbb{N}}}
\begin{document}

\title{Dissipativity and positive off-diagonal property of operators on ordered Banach spaces}

\author{Feng Zhang}
\address{College of Mathematics, Taiyuan University of Technology, Taiyuan 030024, Shanxi, China}
\email{zhangfeng.0631@163.com}

\author{Onno van Gaans}
\address{Mathematical Institute, Leiden University, 2333 CA Leiden, The Netherlands}
\email{vangaans@math.leidenuniv.nl}

\subjclass[2020]{Primary 46B40, 47B60, 47D03}

\date{June 10, 2020}


\keywords{ordered Banach space, positive contractive semigroup, $p$-dissipativity, positive off-diagonal property}

\begin{abstract}
In this paper, we provide a sublinear function $p$ on ordered Banach spaces,  which depends on the order structure of the space. With respect to this $p$, we study the relation between  $p$-contractivity of positive semigroups and the $p$-dissipativity of its generators. The positive off-diagonal property of generators is also studied in  ordered vector spaces.
\end{abstract}

\maketitle

\section{Introduction}

It is known that on a Banach space $X$, the positivity and the contractivity 
of a semigroup $(T(t))_{t\ge0}$  can be characterized by means of dissipativity of its generator $A$ with respect to some appropriate sublinear functions, see \cite[Theorem 2.6]{Are1986A}. 
This property is also proved in the case of $X$ being an ordered Banach space with normal positive cone and with respect to a canonical half-norm on $X$,
see \cite[Proposition 7.13]{CleHeiAngDuiPag1987}. 
In Banach lattices, moreover, the positivity of $(T(t))_{t\ge0}$ can be  characterized by its  generator  $A$ which satisfies the ``positive off-diagonal'' property (it is called ``positive minimum principle'' in \cite[Theorem 1.11]{Are1986C}). This is also studied in an ordered Banach space with a positive cone with nonempty interior, \cite[Theorem 7.27]{CleHeiAngDuiPag1987}.
 For more positive off-diagonal properties of operators on ordered normed spaces, we refer the reader to  \cite{arendt1982generalization, kalauch2003positive, Kal2006, KS2013}.

The goal of this paper is to investigate the positivity and contractivity of semigroups through the dissipativity of generators with respect to corresponding sublinear functions on ordered vector spaces. So the choice of sublinear functions on ordered vector spaces is crucial. In Section \ref{Half-norms}, we give two different ways of defining the sublinear functions on ordered vector spaces. One is given by a regular norm and the other one is obtained through a dual function. The latter one turns out to be more effective in studying the positivity and contractivity of semigroups on ordered Banach spaces in Section \ref{Cp-oBs}, these are also our main results. Moreover, in Section \ref{POD-ovs} we will show 
a representation for positive functions on Archimedean pre-Riesz spaces, which will be used to study the positive off-diagonal property of operators on the pre-Riesz space $C^1[0,1]$.

\section{Preliminaries}

In this section, we will collect some basic terminology, which we mainly refer to \cite{AliBur1985, Are1986A, Kalauch-vanGaans}.
Let $X$ be a real vector space, and $K\subseteq X$. $K$ is said to be a \emph{cone} in $X$ if $x, y\in K$, $\lambda, \mu \in \R^+$ implies $\lambda x+\mu y\in K$ and $K\cap (-K) =\{0\}$. 
A partial order in $X$ is induced by $x\le y$ whenever $y-x\in K$, we then say $(X,K)$ is a \emph{(partially) ordered vector space}. 
$(X, K, \|\cdot\|)$ is called an \emph{ordered Banach space} whenever it is norm complete.
The space of all bounded linear operators on $X$ is denoted by $L(X)$, the positive operators in $L(X)$ are $L(X)^+:=\{T\in L(X): Tx\ge 0, \ \forall 0\le x\in X\}$. 
Then $(L(X), L(X)^+)$ is an ordered vector space.
The domain of an operator $T$ on $X$ is denoted by $\mathcal{D}(T)$. 
A one-parameter semigroup of operators is  written usually as $(T(t))_{t\geq 0}$, and it is  called \emph{strongly continuous} if the map $t\mapsto T(t)$ is continuous for the strong topology on $L(X)$. 

Let $(X, K, \|\cdot\|)$ be an ordered normed vector space. A function $p$ on $X$ is called \emph{sublinear} if $p(x+y)\le p(x)+p(y)$ for all $x, y\in X$ and $p(\lambda x)=\lambda p(x)$ for all $x\in X$, $\lambda\ge 0$.
It is clear that $p(0)=0$. 
If a sublinear function $p$ on $X$ satisfies  $p(x)+p(-x)>0$ whenever $x\neq0$ in $X$, then $p$ is called  a \emph{half-norm}. If  there exists a constant $M>0$ such that $p(x)+p(-x)\geq M\|x\|$ for all $x\in X$, then $p$ is  called a \emph{strict half-norm}. 
Let $x\in X$, define
\begin{align}\label{canonical half norm}
\psi(x)=\mbox{dist}(-x, K)&=\inf \{\|x+y\|\colon y\in K\}\nonumber \\ 
&=\inf\{\|y\|\colon x\le y,  y\in X\},
\end{align} 
then $\psi$ is a half-norm in $X$. This half-norm $\psi$ is called the  \emph{canonical half-norm}. 
A seminorm $p$ on $X$ is called \emph{regular} if for all $x\in X$ one has
\[p(x):=\inf\{p(y)\colon y\in X \mbox{ such that } -y\leq x\leq y\}.\] 
A regular seminorm on an ordered vector space will be denoted by   $\|\cdot\|_r$. 
If $(X,K)$ is an ordered  vector space equipped with a seminorm $p$, 
then  for $x\in X$,  $\|x\|_r:=\inf\{p(y)\colon y\in X, -y\le x \le y \}$ is called the \emph{regularization} of the original seminorm $p$. 
Moreover,  $\|\cdot\|_r$ is a seminorm,  and  $\|\cdot\|_r\le p(\cdot)$ on $K$.
A seminorm $p$ on $X$ is called \emph{monotone} if for every $x, y \in X$ such that $0\le x\le y$ one has that $p(x)\le p(y)$. 

If $(M, \|\cdot\|)$ is any normed vector space, we denote by $M'$ the continuous linear functionals on $M$.

Let $(X, K, \|\cdot\|)$ be an ordered normed vector space, $p$ be a  sublinear function on $X$.
A bounded operator $T$ on $X$ is called \emph{$p$-contractive} if $p(Tx)\leq p(x)$ for all $x\in X$. 
Similarly, a semigroup $(T(t))_{t\geq0}$ is called \emph{$p$-contractive} if $T(t)$ is $p$-contractive for all $t\geq0$.
The  \emph{subdifferential} of  $p$ at point $x\in X$, denoted by $dp(x)$, is defined  as
 \begin{equation}
\label{eq:cone2}
dp(x)=\{x'\in K' \colon  \langle y, x'\rangle\le p(y) \mbox{ for all } y\in X,
\langle x, x'\rangle =p(x)\}.
\end{equation}
\begin{definition}
An operator $A\colon \mathcal{D}(A) \subseteq X\to X$ is called \emph{$p$-dissipative} if for all $x \in \mathcal{D}(A)$ there exists $y'\in dp(x)$ such that $\langle Ax, y'\rangle\le0$; $A$ is called $strictly$ $p$-dissipative if for all $x\in \mathcal{D}(A)$ the inequality $\langle Ax,y'\rangle\le0$ holds for all $y'\in dp(x)$.
\end{definition}

Let $(X, \|\cdot\|)$ be a Banach lattice,  $p$ be a canonical half-norm on  $X$. 
We have that $p(x)=\|x^+\|$ for all $x\in X$. 
In fact, let $x\in X$, $p(x)\le \|x^+\|$  is obvious. Since $|x+y|\ge (x+y)^+\ge x^+$ for any $0\le y\in X$, we have that $\|x+y\|\ge \|x^+\|$. So $p(x)\ge \|x^+\|$. 
To make a difference from  the above, we will use  distinctive notations.
 Let $N^+\colon X\rightarrow \R$ be the function given by  $N^+(x)=\|x^+\|$. 
 Then the subdifferential of $N^+$  at point $x \in X$ is 
\[dN^+(x)=\{x'\in K'\colon \|x'\|\le 1, \langle x, x' \rangle=\|x^+\|\}.\]
An operator $T$ on $X$ is called \emph{(strictly) dispersive} if $T$ is (strictly) $N^+$-dissipative. 

We continue by some notations of pre-Riesz theory. 
An ordered vector space $(X,K)$   is called \emph{directed} if for every $x, y\in X$ there exists $z\in X$ such that $z\ge x, z\ge y$, and is called  \emph{Archimedean} if for every $x, y\in X$ with $nx\le y$ for all $n\in \N$ one has $x\le 0$. 
We say that $X$ is  a \emph{pre-Riesz space} if for every $x, y, z \in X$ the inclusion $\{x+y, x+z\}^{\text{u}} \subseteq\{y, z\}^{\text{u}}$ implies $x \in K$, where $\{x, y\}^{\text{u}}=\{z\in X\colon z\ge x, z\ge y\}$. 
Every   directed Archimedean  ordered vector space is pre-Riesz \cite{Kalauch-vanGaans}.  A  linear subspace $D$ of $X$ is called \emph{order dense} in $X$  if 
\[x=\inf\{d\in D\colon d\ge x\}, \forall x\in X.\]
Every pre-Riesz space admits a Riesz completion, and it is an order dense subspace of the Riesz completion \cite{Kalauch-vanGaans}.  A linear map $i\colon X\rightarrow Y$ between two ordered vector spaces is called \emph{bipositive} if for every $x\in  X$ one has $x\ge 0$ if and only if $i(x)\ge 0$.

\section{Half-norms}\label{Half-norms}

In this section, we will introduce two different seminorms on  ordered vector spaces. One is induced by the regular norms, and the other one involves the order structure which will be used in the following section.

\begin{proposition}
 Let $(X, K, \|\cdot\|)$ be an ordered normed vector space, and $\|\cdot\|_r$ the regularization of $\|\cdot\|$,   and let $p$ be defined as 
\begin{eqnarray}\label{itemi}
p(x)=\inf\{\|y\|_r\colon \, y\in X, y\geq0, y\geq x\}, \forall x\in X.
\end{eqnarray}
Then  $p$  is a strict half-norm on $X$. Moreover, if $\psi$ on $X$ be defined by (\ref{canonical half norm}), then $p(x)=\psi(x)$ for all $x\in K\cup (-K)$, in particular, $p(x)=\psi(x)=0$ for $x\in (-K)$.
\begin{proof}
 Let $x\neq 0$ be in $X$. Since $\|\cdot\|_r$ is the regularization of $\|\cdot\|$, we have that $\|x\|_r:=\inf\{\|y\| \colon y\in X, -y\le x \le y \}$ with $y\neq 0$. As $\|\cdot\|$ is a norm, $\|y\|\neq 0$. 
 So  we have that $\|x\|_r>0$. 
   If $y\ge 0$, $y\ge x$ and $z\ge0$, $z\ge-x$, then $-(y+z)\le x \le (y+z)$. 
Since $\|\cdot\|_r$ is regular, we have $\| x\|_r \le \|y+z\|_r $.  So $\|y\|_r+\|z\|_r\ge \|y+z\|_r\ge \|x\|_r>0$. Hence, by passing to the infimum in each term we get $p(x)+p(-x)\ge \|x\|_r>0$. Thus $p$ is a strict half-norm. 
 
Moreover, if $0\le x \in X$, we can take $y=x\ge 0$ in (\ref{itemi}), then $p(x)=\|x\|_r=\psi(x)$.  It is clear that  $p(x)=\psi(x)=0$ for $x\le 0$ in $X$.
\end{proof}
\end{proposition}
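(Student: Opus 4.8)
The plan is to establish, in order, that $p$ is sublinear, that $p$ satisfies the strict half-norm estimate $p(x)+p(-x)\ge M\|x\|$, and finally the two identities with $\psi$; I expect the only genuinely nonroutine step to be the lower bound in the strict half-norm estimate.

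\emph{Sublinearity.} This is formal. For positive homogeneity with $\lambda>0$, the map $y\mapsto\lambda y$ is a bijection of $\{y\ge0:y\ge x\}$ onto $\{y\ge0:y\ge\lambda x\}$ and $\|\cdot\|_r$ is positively homogeneous, so $p(\lambda x)=\lambda p(x)$; and $p(0)=0$ by taking the witness $y=0$. For subadditivity, if $y_1,y_2\ge0$ with $y_1\ge x_1$ and $y_2\ge x_2$, then $y_1+y_2$ competes in the infimum defining $p(x_1+x_2)$, so $p(x_1+x_2)\le\|y_1+y_2\|_r\le\|y_1\|_r+\|y_2\|_r$ because $\|\cdot\|_r$ is a seminorm; passing to the infimum over $y_1$ and then $y_2$ gives $p(x_1+x_2)\le p(x_1)+p(x_2)$. (Implicitly one restricts attention to directed $X$, or allows $p$ to take the value $+\infty$, so that these manipulations make sense.)

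\emph{Strict half-norm.} The key observation is that a witness for $p(x)$ together with a witness for $p(-x)$ produces a witness in the regularization of $\|\cdot\|$: if $y_1,y_2\ge0$ with $y_1\ge x$ and $y_2\ge -x$, then $x\le y_1\le y_1+y_2$ and $-x\le y_2\le y_1+y_2$, i.e.\ $-(y_1+y_2)\le x\le y_1+y_2$, so by regularity of $\|\cdot\|_r$ one has $\|x\|_r\le\|y_1+y_2\|_r\le\|y_1\|_r+\|y_2\|_r$; taking infima gives $p(x)+p(-x)\ge\|x\|_r$. Since $\|\cdot\|$ is a genuine norm, $\|x\|_r>0$ whenever $x\ne0$ (any $y$ with $-y\le x\le y$ and $x\ne0$ is itself nonzero), so $p$ is at least a half-norm; upgrading to the uniform bound $\|x\|_r\ge M\|x\|$ requires knowing that the regularization of $\|\cdot\|$ is equivalent to $\|\cdot\|$ (which holds, for instance, when $\|\cdot\|$ is monotone, or when $K$ is normal and closed). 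This comparison of $\|\cdot\|_r$ with the original norm is the main obstacle; granting it, the strict half-norm property follows.

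\emph{Identities with $\psi$.} If $x\ge0$, then $y=x$ is admissible for $p(x)$, and any admissible $y$ satisfies $0\le x\le y$, whence $\|x\|_r\le\|y\|_r$ by monotonicity of the regular seminorm; therefore $p(x)=\|x\|_r$. Still for $x\ge0$, the sets $\{y:x\le y\}$ and $\{y:-y\le x\le y\}$ coincide (if $x\le y$ and $x\ge0$ then $y\ge0$, so $-y\le0\le x$), so the last expression in (\ref{canonical half norm}) gives $\|x\|_r=\psi(x)$, hence $p(x)=\psi(x)$. If $x\le0$, the witness $y=0$ is admissible for $p$ and shows $p(x)=0$, and similarly $\psi(x)=0$; this covers $x\in K\cup(-K)$ and in particular proves the claim for $x\in-K$.
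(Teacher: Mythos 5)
Your argument follows the paper's own route: the central step is identical (combine witnesses $y\ge 0$, $y\ge x$ and $z\ge 0$, $z\ge -x$ into $-(y+z)\le x\le y+z$ and use regularity of $\|\cdot\|_r$ to get $p(x)+p(-x)\ge \|x\|_r$), and the identities with $\psi$ are obtained the same way. You are in fact more careful than the paper in two places: you check sublinearity explicitly, and for $x\ge 0$ you justify $p(x)=\|x\|_r$ via monotonicity of $\|\cdot\|_r$ over all competitors rather than only exhibiting the witness $y=x$, which by itself gives only $p(x)\le\|x\|_r$.

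The substantive point is the one you flag: the paper's definition of a \emph{strict} half-norm demands a uniform bound $p(x)+p(-x)\ge M\|x\|$, and this does not follow from the pointwise inequality $p(x)+p(-x)\ge\|x\|_r$ together with $\|x\|_r>0$ for $x\ne 0$; one needs $\|\cdot\|_r$ to dominate a multiple of $\|\cdot\|$, which requires an extra hypothesis (e.g.\ monotonicity of $\|\cdot\|$, or normality of $K$). You do not close this gap, but neither does the paper: its proof concludes ``strict half-norm'' from exactly the non-uniform estimate $p(x)+p(-x)\ge\|x\|_r>0$, which only yields the half-norm property. A smaller shared weakness: both you and the paper infer $\|x\|_r>0$ for $x\ne 0$ from the fact that every competitor $y$ with $-y\le x\le y$ is nonzero; an infimum of strictly positive numbers can still be zero, so this step also needs something extra, such as closedness of $K$ (then $-y_n\le x\le y_n$ with $y_n\to 0$ forces $x\in K\cap(-K)=\{0\}$). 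In short, your proposal reproduces the paper's argument faithfully, and the reservation you record about the uniform lower bound identifies a genuine gap in the proposition as stated and proved, not a defect peculiar to your write-up.
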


\begin{proposition}\label{regular-contractive}

 Let $(X, K, \|\cdot\|)$ be an ordered normed vector space, and $\|\cdot\|_r$ the regularization of $\|\cdot\|$,   and let $p$ be defined by \eqref{itemi}. 
 If  $T\in L(X)^+$ is a contractive operator with respect to the regular norm $\|\cdot\|_r$, 
 then $T$ is $p$-contractive.

\begin{proof}
Let $T\in L(X)^+$. By the assumption, we have that $\|Ty\|_r\le \|y\|_r$ for all $0\le y\in X$. Then for $x\in X$,
$p(x)=\inf\{\|y\|_r\colon \, y\ge 0, y\ge x\} \ge\inf\{\|Ty\|_r\colon \, y\ge 0, y\ge x\}$. 
Since $T$ is a positive operator, we have that $y\ge 0, y\ge x$ implies $Ty\ge 0, Ty\ge Tx$. Hence, $\inf\{\|Ty\|_r\colon \, y\ge 0, y\ge x\}\ge \inf\{\|Ty\|_r\colon \, Ty\ge 0, Ty\ge Tx\} =p(Tx)$.
So $T$ is $p$-contractive.
\end{proof}

\end{proposition}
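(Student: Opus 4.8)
The plan is to argue directly from the definition \eqref{itemi}, using that a positive operator carries the set over which the infimum for $p(x)$ is taken into the corresponding set for $p(Tx)$, while $\|\cdot\|_r$-contractivity keeps the relevant seminorms from growing.

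First I would fix $x\in X$. If there is no $y\in X$ with $y\ge 0$ and $y\ge x$, then $p(x)$ is the infimum of the empty set and the desired inequality is vacuous, so I may assume such $y$ exist. For any such $y$, I would check that $Ty$ is admissible in the infimum defining $p(Tx)$: indeed $Ty\ge 0$ because $T\in L(X)^+$, and $Ty\ge Tx$ because $y-x\in K$ and $T(K)\subseteq K$. Hence $p(Tx)\le\|Ty\|_r$. Now invoking the hypothesis $\|Tz\|_r\le\|z\|_r$ with $z=y$ gives $p(Tx)\le\|Ty\|_r\le\|y\|_r$. Taking the infimum over all $y\ge 0$ with $y\ge x$ then yields $p(Tx)\le p(x)$, i.e.\ $T$ is $p$-contractive.

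I do not anticipate a genuine obstacle: the whole content is the observation that $y\mapsto Ty$ maps $\{y\in X: y\ge 0,\ y\ge x\}$ into $\{w\in X: w\ge 0,\ w\ge Tx\}$ without increasing $\|\cdot\|_r$, so the infimum over the image of the first set dominates the infimum over the second. The only points worth a line of care are that both order relations $y\ge 0$ and $y\ge x$ survive simultaneously under $T$ (immediate from positivity and linearity) and that $\|\cdot\|_r$ being merely a seminorm rather than a norm plays no role in this monotonicity argument.
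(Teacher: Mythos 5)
Your proof is correct and follows essentially the same route as the paper's: both arguments observe that $T$ maps the admissible set $\{y:y\ge 0,\ y\ge x\}$ into the admissible set for $p(Tx)$ and then use $\|Ty\|_r\le\|y\|_r$ before passing to the infimum. Your version is organized elementwise rather than as a chain of infima and additionally notes the vacuous empty-set case, but the substance is identical.
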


We continue with a different approach of sublinear functions on ordered vector spaces, which turns out to be more useful in dealing with the contractivity and positivity of semigroups.

Let $\phi \in X'$,  define $p_\phi$ on $X$ by
\begin{eqnarray}\label{itemii}
p_{\phi}(x)=\inf\{\langle y, \phi\rangle\colon \, y\in X,  y\geq0, y\geq x\}, \, \forall\, x\in X.
\end{eqnarray}

\begin{proposition}
Let $X$ be an ordered vector space, and let $\phi\in X'$. If 
$p_{\phi}$ is defined by \eqref{itemii}, then $p_{\phi}$ is sublinear. Moreover, $p_{\phi}(x)=\langle x, \phi\rangle$ if $x\ge 0$, and $p_{\phi}(x)=0$ if $x\le 0$. 
\end{proposition}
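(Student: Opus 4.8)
The plan is to verify the two defining properties of a sublinear function directly from formula \eqref{itemii}, and then to evaluate $p_\phi$ on the cone and on its negative by inserting the obvious competitors into the infimum. Throughout I would use that $\phi$ is positive, so that $\langle y,\phi\rangle\ge0$ whenever $y\ge0$; this is what keeps the infimum in \eqref{itemii} bounded below (by $0$), hence makes $p_\phi$ well defined, and it is also the only nontrivial ingredient in the ``moreover'' part.

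For positive homogeneity, fix $x\in X$ and $\lambda>0$. The substitution $y=\lambda z$ is a bijection of the feasible set $\{y\colon y\ge0,\ y\ge\lambda x\}$ onto $\{z\colon z\ge0,\ z\ge x\}$, and $\langle\lambda z,\phi\rangle=\lambda\langle z,\phi\rangle$; passing to the infimum gives $p_\phi(\lambda x)=\lambda p_\phi(x)$. For $\lambda=0$ I would check $p_\phi(0)=0$ separately: $y=0$ is feasible, so $p_\phi(0)\le0$, while positivity of $\phi$ gives $\langle y,\phi\rangle\ge0$ for every feasible $y\ge0$, so $p_\phi(0)\ge0$. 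For subadditivity, take $x_1,x_2\in X$ and arbitrary feasible $y_i$ (that is, $y_i\ge0$ and $y_i\ge x_i$) for $i=1,2$; then $y_1+y_2\ge0$ and $y_1+y_2\ge x_1+x_2$, so $p_\phi(x_1+x_2)\le\langle y_1,\phi\rangle+\langle y_2,\phi\rangle$, and taking the infimum first over $y_1$ and then over $y_2$ yields $p_\phi(x_1+x_2)\le p_\phi(x_1)+p_\phi(x_2)$.

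For the evaluation formulas: if $x\ge0$, then $y=x$ is feasible, giving $p_\phi(x)\le\langle x,\phi\rangle$; conversely, any feasible $y$ satisfies $y-x\ge0$, so $\langle y,\phi\rangle=\langle x,\phi\rangle+\langle y-x,\phi\rangle\ge\langle x,\phi\rangle$, whence $p_\phi(x)\ge\langle x,\phi\rangle$, and equality follows. If $x\le0$, then $y=0$ is feasible, so $p_\phi(x)\le0$, while feasibility forces $y\ge0$ and hence $\langle y,\phi\rangle\ge0$, giving $p_\phi(x)\ge0$; thus $p_\phi(x)=0$.

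There is essentially no hard step here; the only point to watch is well-definedness. The feasible set $\{y\colon y\ge0,\ y\ge x\}$ is nonempty when $X$ is directed (otherwise one must allow the value $+\infty$), and positivity of $\phi$ bounds the infimum below, so $p_\phi$ takes values in $[0,\infty]$, and in $[0,\infty)$ under directedness. If one dropped positivity of $\phi$ the statement would fail, since an upper bound $y_0\ge0$ with $\langle y_0,\phi\rangle<0$ could be added repeatedly to push the infimum to $-\infty$; so the (implicit) hypothesis that $\phi$ is positive is genuinely used.
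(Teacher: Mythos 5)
Your proof is correct and follows essentially the same route as the paper's: subadditivity by summing feasible upper bounds for $x_1$ and $x_2$ to get feasible bounds for $x_1+x_2$, homogeneity by rescaling, and the evaluation formulas by inserting the competitors $y=x$ and $y=0$. You are somewhat more careful than the paper, which dismisses these last points as ``obvious'' or ``clear'': in particular you rightly flag that positivity of $\phi$ is an implicit hypothesis genuinely needed for the ``moreover'' part, and that directedness of $X$ (or allowing the value $+\infty$) is needed for the infimum to be over a nonempty set.
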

\begin{proof}
Let $\phi \in X'$, and  $x, y\in X$. By the definition, we have that $p_{\phi}(x+y)
=\inf\{\langle z, \phi\rangle\colon \, z\in X,  z\geq0, z\geq x+y\}$. Let $ z_1+z_2=z$, we have that 
$\inf\{\langle z, \phi\rangle\colon \, z\in X,  z\geq0, z\geq x+y\}=\inf\{\langle z_1+z_2, \phi\rangle\colon \, z_1+z_2\in X,  z_1+z_2\geq0, z_1+z_2\geq x+y\}$.
Since $z_1\in X, z_2\in X,  z_1\geq0,  z_2\geq0, z_1\geq x, z_2\geq y$ implies $z_1+z_2\in X,  z_1+z_2\geq0, z_1+z_2\geq x+y$, we have that $\inf\{\langle z_1+z_2, \phi\rangle\colon \, z_1+z_2\in X,  z_1+z_2\geq0, z_1+z_2\geq x+y\}\le\inf\{\langle z_1+z_2, \phi\rangle\colon \, z_1\in X, z_2\in X,  z_1\geq0,  z_2\geq0, z_1\geq x, z_2\geq y\}$. It is clear that $\inf\{\langle z_1+z_2, \phi\rangle\colon \, z_1\in X, z_2\in X,  z_1\geq0,  z_2\geq0, z_1\geq x, z_2\geq y\}
=\inf\{\langle z_1, \phi\rangle\colon \, z_1\in X,  z_1\geq0,   z_1\geq x\}+\inf\{\langle z_2, \phi\rangle\colon \, z_2\in X,  z_2\geq0,   z_2\geq y\}=p_{\phi}(x)+p_{\phi}(y)$. So we have the subadditivity.
The positive homogeneity of $p_{\phi}$ is obvious. So $p_{\phi}$ is sublinear.

It is clear that 
$p_{\phi}(x)=\langle x, \phi\rangle$ for $x\ge 0$, and $p_{\phi}(x)=0$ for $x\le 0$. 
\end{proof}

\begin{proposition}
If $X$ is a vector lattice, $\phi\in X'$ and  $p_{\phi}$ is defined by \eqref{itemii}. Then $p_{\phi}(x^+)=p_{\phi}(x)$ for  $x\in X$. 
\end{proposition}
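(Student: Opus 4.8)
The key observation is purely order-theoretic: in a vector lattice, for any $x\in X$ the feasible set in the infimum defining $p_\phi(x)$ is exactly the set of upper bounds of $x^+$. Indeed, for $y\in X$ the two conditions ``$y\ge 0$ and $y\ge x$'' hold simultaneously if and only if $y\ge 0\vee x=x^+$. Hence
\[
\{y\in X\colon y\ge 0,\ y\ge x\}=\{y\in X\colon y\ge x^+\},
\]
and therefore $p_\phi(x)=\inf\{\langle y,\phi\rangle\colon y\in X,\ y\ge x^+\}$.

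Next I would apply the same identity with $x$ replaced by $x^+$. Since $x^+\ge 0$, the constraint ``$y\ge 0$ and $y\ge x^+$'' is equivalent to ``$y\ge x^+$'', so by definition
\[
p_\phi(x^+)=\inf\{\langle y,\phi\rangle\colon y\in X,\ y\ge 0,\ y\ge x^+\}=\inf\{\langle y,\phi\rangle\colon y\in X,\ y\ge x^+\}.
\]
Comparing the two displays gives $p_\phi(x^+)=p_\phi(x)$, which is the claim. (Equivalently, one could invoke the preceding proposition: since $x^+\ge 0$ it gives $p_\phi(x^+)=\langle x^+,\phi\rangle$, and then it remains only to check that $\inf\{\langle y,\phi\rangle\colon y\ge 0,\ y\ge x\}=\langle x^+,\phi\rangle$ using that $x^+$ itself is feasible and is a lower bound for the feasible set.)

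\textbf{Main obstacle.} There is essentially no obstacle; the only point requiring care is that the equivalence ``$y\ge 0$ and $y\ge x$ $\iff$ $y\ge x^+$'' genuinely uses that $X$ is a vector lattice (so that $x^+=x\vee 0$ exists and is the least upper bound of $\{x,0\}$), which is exactly the standing hypothesis. No continuity or positivity of $\phi$ beyond $\phi\in X'$ is needed for this argument.
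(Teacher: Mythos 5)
Your argument is correct and is essentially the paper's proof: both rest on the fact that, since $x^+$ is the least upper bound of $\{x,0\}$, the feasible sets $\{y\colon y\ge 0,\ y\ge x\}$ and $\{y\colon y\ge 0,\ y\ge x^+\}$ coincide (the paper phrases this as two reverse inclusions yielding two inequalities, you phrase it as one set identity). Your remark that no positivity of $\phi$ is needed is also accurate; the paper's appeal to monotonicity of $\phi$ in the second inclusion is superfluous.
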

\begin{proof}
Let $X$ be a vector lattice, $\phi\in X'$ and $x\in X$. By \eqref{itemii}, we have
\begin{align*}
p_{\phi}(x^+)&=\inf\{\langle y,\phi\rangle\colon \, y\ge x^+, y\ge 0\}\\
&=\inf\{\langle y,\phi\rangle\colon \, y\ge \inf\{z\colon z\ge x, z\ge 0\}, y\ge 0\}\\
&\ge\inf\{\langle y,\phi\rangle\colon \, y\ge x, y\ge 0\}=p_{\phi}(x).
\end{align*}
Because  $y\ge x$ and $y\ge 0$ implies $y\ge x^+$, and $\phi$ is monotone, 
we have that $\{\langle y,\phi\rangle\colon \, y\ge x, y\ge 0\}\subseteq \{\langle y,\phi\rangle\colon \, y\ge x^+, y\ge 0\}$, and hence $\inf\{\langle y,\phi\rangle\colon \, y\ge x, y\ge 0\}\ge \inf\{\langle y,\phi\rangle\colon \, y\ge x^+, y\ge 0\}$. So we have $p_{\phi}(x)\ge p_{\phi}(x^+) $. 
\end{proof}

We note that the continuous differential function space $X=C^1[0,1]$  and the Sobolev space $X=W^{n,p}$ are  (pointwise) ordered vector spaces but not lattices. They are norm complete with respect to $\|\cdot\|_{\infty}$ and $\|\cdot\|_{W^{n,p}}$ respectively. However, these norms are not monotone, but one could still define a sublinear function $p_{\phi}$ by \eqref{itemii}. Next, we will study the contractivity of $(T(t))_{t\ge0}$ with respect to $p_{\phi}$ given by \eqref{itemii} on ordered Banach space in the following section.

\section{Contractivity and positivity of semigroups on ordered Banach spaces}\label{Cp-oBs}

In this section, we will study the relation between the $p_\phi$-contractivity of  $(T(t))_{t\ge0}$ and the strictly $p_\phi$-dissipativity of its generator $A$   for  
 $X$ being an ordered Banach space, the sublinear function $p_\phi$ on $X$ is defined by (\ref{itemii}). Firstly,  we will  give a sufficient condition  under which $(T(t))_{t\ge0}$ is contractive with respect to $p_\phi$ on an ordered Banach space.

We will use $X$ to denote the ordered Banach space in the following of this section, and $p_\phi$ on $X$ is defined by (\ref{itemii}) with respect to $\phi\in X'$.

\begin{theorem}\label{contra}
 Let $A\colon  X\supseteq \mathcal{D}(A)\to X$  be a $p_\phi$-dissipative operator. If $(I-\lambda A)$ is invertible for some $\lambda> 0$, then $(I-\lambda A)^{-1}$ is $p_\phi$-contractive. 
 \end{theorem}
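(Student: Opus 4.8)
The plan is to show that for $x \in X$ and $u := (I-\lambda A)^{-1}x$, one has $p_\phi(u) \le p_\phi(x)$. Since $u \in \mathcal D(A)$ and $x = u - \lambda Au$, the key is to pick a functional $y' \in dp_\phi(u)$ witnessing $p_\phi$-dissipativity, i.e. with $\langle Au, y'\rangle \le 0$, and then estimate
\[
p_\phi(u) = \langle u, y'\rangle = \langle u - \lambda Au, y'\rangle + \lambda \langle Au, y'\rangle = \langle x, y'\rangle + \lambda\langle Au, y'\rangle \le \langle x, y'\rangle.
\]
Since $y' \in dp_\phi(u) \subseteq K'$ satisfies $\langle z, y'\rangle \le p_\phi(z)$ for all $z \in X$, the right-hand side is at most $p_\phi(x)$, which closes the argument.

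First I would record the structural facts about $dp_\phi(u)$ that I need: by definition it is nonempty exactly when the supporting functionals exist, but in the relevant estimate I only use $\langle u, y'\rangle = p_\phi(u)$ and $\langle z, y'\rangle \le p_\phi(z)$ for all $z$. The $p_\phi$-dissipativity hypothesis guarantees that among the functionals in $dp_\phi(u)$ there is at least one with $\langle Au, y'\rangle \le 0$; I would fix such a $y'$ at the outset. One should be slightly careful that $dp_\phi(u)$ is nonempty for every $u \in \mathcal D(A)$: this is implicit in the definition of $p_\phi$-dissipativity (the statement ``there exists $y' \in dp_\phi(x)$ such that\ldots'' presupposes the set is nonempty, or else the condition would be vacuous and false); in any case, $p_\phi$ is sublinear, so by Hahn--Banach there is a linear $y'$ with $y' \le p_\phi$ and $\langle u, y'\rangle = p_\phi(u)$, and since $p_\phi(z) \le \langle z',\phi\rangle$ for any $z' \ge z$, $z' \ge 0$, one checks $y' \in K'$ and $y'$ is continuous (dominated by $p_\phi$, which is bounded by $\|\phi\|\,\|\cdot\|$ on, say, a bounded order interval — more simply $p_\phi(z) \le \langle w,\phi\rangle$ for suitable $w$). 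I would keep this remark brief and lean on the definition.

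The one genuinely delicate point is continuity/membership in $K'$ of the chosen functional, i.e.\ that elements of $dp_\phi(u)$ really lie in $X'$ and are positive; but this is built into the definition \eqref{eq:cone2} of the subdifferential, so no extra work is required here. The computation itself is then the short display above, together with the two inequalities $\lambda \langle Au, y'\rangle \le 0$ (from dissipativity and $\lambda > 0$) and $\langle x, y'\rangle \le p_\phi(x)$ (from $y' \in dp_\phi(u)$). Since $x \in X$ was arbitrary, $(I-\lambda A)^{-1}$ is $p_\phi$-contractive. I do not expect a serious obstacle; the main thing to get right is the bookkeeping of which functional is chosen and the sign of $\lambda$.
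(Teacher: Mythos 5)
Your proposal is correct and is essentially the same argument as the paper's: both fix the point $u\in\mathcal{D}(A)$, choose $y'\in dp_\phi(u)$ with $\langle Au,y'\rangle\le 0$ from the dissipativity hypothesis, and combine $\langle u,y'\rangle=p_\phi(u)$ with $\langle x,y'\rangle\le p_\phi(x)$ to get $p_\phi(u)\le p_\phi(x)$ (the paper phrases this as the equivalent estimate $p_\phi((\lambda I-A)x)\ge\lambda p_\phi(x)$ for $x\in\mathcal{D}(A)$). Your version is if anything slightly cleaner, since it dispenses with the paper's unnecessary detour through $\lvert\langle\cdot,\psi\rangle\rvert$ and real parts.
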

\begin{proof}
For a fixed $x\in \mathcal{D}(A)$, let $\psi\in dp_{\phi}(x)$ be such that $\langle Ax, \psi\rangle\le 0$.
Then $\langle y,\psi\rangle\le p_\phi(y)$ for all $y\in X$, and $\langle x,\psi\rangle= p_\phi(x)$. Since $\psi$ is positive on $X$, we have that  $|\langle y, \psi\rangle|\le p_{\phi}(y)$.  For some $\lambda >0$ one has that
\begin{align*}
& p_\phi((\lambda I-A)x)
\ge |\langle (\lambda I-A)x, \psi\rangle| 
\ge \mbox{Re}\langle(\lambda I-A)x, \psi\rangle\\
=&\mbox{Re}(\langle\lambda x, \psi\rangle-\langle Ax, \psi\rangle)\ge \mbox{Re}\langle\lambda x, \psi\rangle
= \mbox{Re}\lambda\langle x, \psi\rangle 
=\mbox{Re}\lambda p_\phi(x)
=\lambda p_\phi(x).
\end{align*}
So if $\lambda>0$ is such that $(I-\lambda A)$ is invertible, then
$(I-\lambda A)^{-1}$ is $p_\phi$-contractive for some $\lambda> 0$.
\end{proof}

Observe that $(I-\lambda A)^{-1}=\frac{1}{\lambda}\left(\frac{1}{\lambda} I-A\right)^{-1}=\frac{1}{\lambda}R\left(\frac{1}{\lambda}, A\right)$. If  a strongly continuous semigroup $(T(t))_{t\ge 0}$ is  generated by the operator $A$, then  for $x$ in $X$, one has
\[
 T(t)x
 =\lim_{n\rightarrow\infty}\left[\frac{n}{t}R\left(\frac{n}{t}, A\right)\right]^n x= \lim_{n\rightarrow \infty}\left(I-\frac{t}{n}A\right)^{-n}x.\]
Hence, by Theorem \ref{contra}, if $A$ is $p_\phi$-dissipative, then  $T(t)$ is $p_\phi$-contractive for every $t\ge 0$.
As a consequence we have the following corollary. 
\begin{corollary}\label{contrT}
If  a strongly continuous semigroup $(T(t))_{t\ge 0}$ is  generated by   a $p_\phi$-dissipative operator $A$, and  $(I-\lambda A)$ is invertible for some $\lambda> 0$.
 Then $T(t)$ is $p_\phi$-contractive for every $t\ge 0$.
\end{corollary}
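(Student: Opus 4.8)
\emph{Proof strategy.} The plan is to deduce the corollary from Theorem \ref{contra} by means of the Euler exponential formula, following the paragraph that precedes the statement. For $t=0$ there is nothing to prove, since $T(0)=I$ and $p_\phi(Ix)=p_\phi(x)$, so fix $t>0$. Because $A$ generates a strongly continuous semigroup, its resolvent set contains a half-line $(\omega,\infty)$ for some $\omega\in\R$; equivalently, $I-\lambda A$ is invertible for every sufficiently small $\lambda>0$, in particular for $\lambda=t/n$ with $n>\omega t$. (In fact this renders the hypothesis ``$I-\lambda A$ invertible for some $\lambda>0$'' automatic once $A$ is a generator.) Hence Theorem \ref{contra} applies with $\lambda=t/n$, and the resolvent $\left(I-\tfrac{t}{n}A\right)^{-1}$ is $p_\phi$-contractive for all large $n$.

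Next I would observe that a composition of $p_\phi$-contractive operators is again $p_\phi$-contractive: if $p_\phi(Sx)\le p_\phi(x)$ and $p_\phi(Rx)\le p_\phi(x)$ for all $x\in X$, then $p_\phi(SRx)\le p_\phi(Rx)\le p_\phi(x)$. Iterating this $n$ times with $S=R=\left(I-\tfrac{t}{n}A\right)^{-1}$ shows that $\left(I-\tfrac{t}{n}A\right)^{-n}$ is $p_\phi$-contractive, i.e.\ $p_\phi\!\left(\left(I-\tfrac{t}{n}A\right)^{-n}x\right)\le p_\phi(x)$ for every $x\in X$ and every large $n$. On the other hand, since $A$ generates $(T(t))_{t\ge0}$, the exponential formula $T(t)x=\lim_{n\to\infty}\left(I-\tfrac{t}{n}A\right)^{-n}x$ holds with convergence in norm for each $x\in X$.

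It remains to pass to the limit in the inequality $p_\phi\!\left(\left(I-\tfrac{t}{n}A\right)^{-n}x\right)\le p_\phi(x)$, and this is the step that needs care: it uses that $p_\phi$ is lower semicontinuous for the norm topology, for then $p_\phi(T(t)x)\le\liminf_{n\to\infty}p_\phi\!\left(\left(I-\tfrac{t}{n}A\right)^{-n}x\right)\le p_\phi(x)$, which is exactly $p_\phi$-contractivity of $T(t)$. I expect this regularity of $p_\phi$ to be the main obstacle. Two remarks make it manageable: when $p_\phi(x)=+\infty$ the desired inequality is vacuous, so one only needs continuity of $p_\phi$ on the convex set where it is finite; and if the cone $K$ is closed and generating, then by an Andô-type decomposition every $x\in X$ can be written $x=u-v$ with $u,v\in K$ and $\|u\|\le C\|x\|$, whence $p_\phi(x)\le\langle u,\phi\rangle\le C\|\phi\|\,\|x\|$, so $p_\phi$ is Lipschitz and the limit is immediate. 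Invoking whatever hypothesis on $(X,K)$ is in force to secure this continuity, one concludes that $T(t)$ is $p_\phi$-contractive for every $t\ge0$.
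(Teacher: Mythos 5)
Your argument is essentially the paper's own: the corollary is deduced there by combining Theorem \ref{contra} (applied with $\lambda=t/n$) with the Euler exponential formula $T(t)x=\lim_{n\to\infty}\left(I-\tfrac{t}{n}A\right)^{-n}x$ and the fact that powers of a $p_\phi$-contractive operator are again $p_\phi$-contractive, exactly as you do. The one point where you go beyond the paper is the final passage to the limit: the paper simply concludes ``hence $T(t)$ is $p_\phi$-contractive'' without explaining why the inequality $p_\phi\bigl(\bigl(I-\tfrac{t}{n}A\bigr)^{-n}x\bigr)\le p_\phi(x)$ survives the strong limit, whereas you correctly flag that some lower semicontinuity of $p_\phi$ (for instance via a bound $p_\phi(z)\le C\|\phi\|\,\|z\|$, available when $K$ is closed and generating, combined with sublinearity) is needed here --- so your version is, if anything, more careful than the paper's.
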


Next, we will study the positivity of strongly continuous semigroup $(T(t))_{t\ge0}$ on an ordered Banach space. The generator $A$ of $(T(t))_{t\ge0}$ is required to be $p_\phi$-dissipative for all $\phi$  in a total subset of an ordered Banach space.


\begin{definition}
A nonempty subset $\Phi\subseteq K'$ is called \emph{total} in $X$ if $\phi(x)\ge0$ for each $\phi\in \Phi$ implies $x\ge0$.
\end{definition}

The following example shows that the intersection of a subdifferential set and a total subset on an ordered Banach space $X=C^1[0,1]$ is nonempty. 

\begin{example}
Consider $X=C^1[0,1]$, and $\Phi=\{\delta_t\colon \, t\in [0,1]\}$, where $\delta_t(x)=x(t)$ for an arbitrary $x\in X$. Since $\delta_t(x)=x(t)\ge 0$ for all $t\in [0,1]$ implies $x\ge 0$ in $X$. So  $\Phi$ is total. 
We claim that for a given $t\in [0,1]$ and $x\in X$, $dp_t(x)\cap \Phi\neq \emptyset$ for $p_t(x)=\inf\{\delta_t(y)\colon \, y\ge 0, y\ge x\}$. In fact,  let $t\in [0,1]$ and $x\in X$, then $p_t(x)=\inf\{\delta_t(y)\colon \, y\ge 0, y\ge x\}=\inf\{y(t)\colon \, y\ge 0, y\ge x\}=x^+(t)$. (Here, as $X=C^1[0,1]$ is a partially ordered vector space, the positive part is defined in such a way, $x^+=\inf\{y\in X\colon y\ge x, y\ge 0\}$, $\forall x\in X$).  So if $x(t)\le 0$ then $p_t(x)=x^+(t)=0$, and $dp_t(x)\cap \Phi=0$.  If  $x\ge 0$, then $\delta_t(y)=y(t)\le y^+(t)=p_t(y)$, and $\psi(x)=\delta_t(x)=x(t)=x^+(t)=p_t(x)$. So $\delta_t\in dp(x)$. Thus we have shown that $ \Phi \subset dp_t(x)$.

\end{example}


\begin{theorem}\label{posi-semi}
Let $A\colon  X\supseteq \mathcal{D}(A)\to X$ be the generator of strongly continuous semigroup $(T(t))_{t\ge 0}$. If $A$ is $p_\phi$-dissipative for all $\phi$ in a total set $\Phi$ and $(I-\lambda A)$ is invertible for some $\lambda \ge 0$, then $T(t)$ is positive for all $t\ge 0$.
\end{theorem}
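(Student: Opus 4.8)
The plan is to combine Corollary \ref{contrT} with the characterization of positivity via the total set $\Phi$. First, I would observe that if $A$ is $p_\phi$-dissipative, then since the same invertibility hypothesis on $(I-\lambda A)$ holds, Corollary \ref{contrT} applies for \emph{each} fixed $\phi\in\Phi$: the semigroup $(T(t))_{t\ge0}$ generated by $A$ is $p_\phi$-contractive for every $t\ge0$. Thus for every $t\ge0$, every $x\in X$, and every $\phi\in\Phi$ we have $p_\phi(T(t)x)\le p_\phi(x)$.

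The key step is to feed negative vectors into this inequality. Fix $t\ge0$ and let $x\le 0$ in $X$. By the third Proposition of Section \ref{Half-norms} (the basic properties of $p_\phi$), we have $p_\phi(x)=0$ for all $\phi\in\Phi\subseteq K'$, since $x\le0$. Hence $p_\phi(T(t)x)\le 0$ for all $\phi\in\Phi$. On the other hand, $p_\phi$ is built as an infimum of values $\langle y,\phi\rangle$ with $y\ge 0$ and $\phi\ge 0$, so $p_\phi$ is bounded below... actually more directly: $p_\phi(T(t)x)\le 0$ means $\inf\{\langle y,\phi\rangle : y\ge 0,\ y\ge T(t)x\}\le 0$; combined with $\langle -T(t)x,\phi\rangle = p_\phi(-T(t)x) - (\text{something})$... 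Let me instead argue cleanly: from $p_\phi(T(t)x)\le p_\phi(x)=0$ and, applying the inequality also to $-x\ge0$ is not what we want. The clean route is: for any $z\in X$ and $\phi\in K'$ one has $\langle z,\phi\rangle\le p_\phi(z)$ (take $y=z$ when... no). Rather, note $-p_\phi(-z)\le \langle z,\phi\rangle\le p_\phi(z)$ is the standard sandwich for sublinear $p_\phi$ with $\phi\in dp_\phi$-type bounds; more simply, $p_\phi(z)\ge \langle z,\phi\rangle$ fails in general, so I would use: $z\ge 0 \iff p_\phi(-z)=\langle -z,\phi\rangle$... The cleanest: set $w=T(t)x$; we want $w\le 0$, i.e. $-w\ge 0$. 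By totality of $\Phi$, it suffices to show $\langle -w,\phi\rangle\ge 0$ for all $\phi\in\Phi$. Now $p_\phi(w)\le 0$; but $p_\phi(w)=\inf\{\langle y,\phi\rangle : y\ge 0, y\ge w\}$, and for any such $y$ we have $y-w\ge 0$ and $y-w\ge -w$... that gives $\langle y,\phi\rangle \ge \langle w,\phi\rangle$ only if $\phi(y-w)\ge 0$, which holds since $\phi\in K'$ and $y\ge w$. Hence $p_\phi(w)\ge \langle w,\phi\rangle$, so $\langle w,\phi\rangle\le p_\phi(w)\le 0$, i.e. $\langle -w,\phi\rangle\ge 0$. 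Since this holds for all $\phi\in\Phi$ and $\Phi$ is total, $-w\ge 0$, that is $T(t)x\le 0$.

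Finally I would upgrade from ``maps $-K$ into $-K$'' to ``$T(t)$ positive''. Given $0\le x\in X$, write the conclusion just obtained with $-x\le 0$: $T(t)(-x)\le 0$, hence $T(t)x = -T(t)(-x)\ge 0$ by linearity of $T(t)$. Therefore $T(t)\in L(X)^+$ for every $t\ge0$, as required.

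The main obstacle — really the only subtle point — is the inequality $\langle w,\phi\rangle\le p_\phi(w)$ for $\phi\in K'$, which is needed to convert the $p_\phi$-estimate into a statement testable by the total set; this follows immediately from positivity of $\phi$ applied to $y-w\ge 0$ for admissible $y$ in the infimum defining $p_\phi(w)$. Everything else is bookkeeping: invoking Corollary \ref{contrT} for each $\phi$, using $p_\phi|_{-K}=0$, and exploiting linearity together with totality of $\Phi$.
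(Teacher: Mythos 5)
Your proposal is correct and follows essentially the same route as the paper: apply the contractivity result for each fixed $\phi\in\Phi$, note $p_\phi(x)=0$ for $x\le 0$, use positivity of $\phi$ to get $\langle T(t)x,\phi\rangle\le p_\phi(T(t)x)\le 0$, and conclude by totality of $\Phi$. The only cosmetic difference is that you make explicit the final linearity step passing from $T(t)(-K)\subseteq -K$ to $T(t)K\subseteq K$, which the paper leaves implicit.
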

\begin{proof}
 Let $\phi\in \Phi$, select  $x\le 0$ in $X$. By Theorem \ref{contra}, for the semigroup $(T(t))_{t\ge 0}$ is generated by the operator $A$  
 one has   $p_\phi(T(t)x)\le p_\phi(x)$ for all $t\ge 0$, which means
\[\inf\{\langle y, \phi\rangle\colon \, y\ge T(t)x, y\ge 0\}\le \inf\{\langle z, \phi\rangle\colon \, z\ge x, z\ge 0\}.\]
Take $z=0$, then the right side of the above inequality is 0. It follows that 
\[\inf\{\langle y, \phi\rangle\colon \, y\ge T(t)x, y\ge 0\}\le 0.\]
Because of $y\ge T(t)x$, then $\langle y, \phi\rangle\ge \langle T(t)x, \phi\rangle$ for $\phi$ is positive. So 
\[\langle T(t)x, \phi\rangle\le \inf\{\langle y, \phi\rangle\colon \, y\ge T(t)x, y\ge 0\}\le 0.\]
Since $\Phi$ is total,  one has that $\langle T(t)x, \phi\rangle\le 0$ for all $\phi\in \Phi$ implies $T(t)x\le0$. Thus $T(t)$ is positive. 
\end{proof}

 \begin{remark}
Due to  \cite[Theorem 1.2]{Are1986C}, if $A$ is densely defined on a Banach lattice, then $(T(t))_{t\ge 0}$ is positive and contractive if and only if $A$ is dispersive and $(\lambda I-A)$ is surjective for some $\lambda>0$. 
By Theorem \ref{contra}, we could generalize one direction of this conclusion to ordered Banach spaces. We will illustrate this through an example  of a second derivative operator with Dirichlet boundary condition. This example originally comes from \cite[Example 1.5]{Are1986C}.
\end{remark}

\begin{example}
Let $X=(C^1[0,1], \|\cdot\|_{\infty})$ be an ordered Banach space, the densely defined operator $A$ be the second derivative operator with Dirichlet boundary condition. Then the domain satisfies 
$\mathcal{D}(A)=\{x\in C^{3}[0,1]\colon\, x(0)=x(1)=x''(0)=x''(1)\}$. 
For $x\in X$, we choose $t\in [0,1]$ such that $p_{\delta_t}(x)=\langle x, \delta_t\rangle=x(t)=\sup_{s\in [0,1]}x(s)=\|x\|_\infty$. Then  $\langle y, \delta_t\rangle=y(t)\le \|y\|_\infty=p_{\delta_t}(y)$ for all $y\in X$. Hence $\delta_t\in dp_{\delta_t}(x)$. Moreover, since $x$ has a maximum in $X$, we have that $\langle Ax, \delta_t\rangle=x''(t)\le 0$. 
So $A$ is $p_{\delta_t}$-dissipative. For $y\in X$ define the function $x_0(t)=\frac{1}{2}[e^t\int_t^1e^{-s}g(s)ds-e^{-t}\int_t^1e^sg(s)ds]$. 
Then there exist $m,n\in \mathbb{R}$ such that $x(t)=x_0(t)+me^t+ne^{-t}$ and $x(0)=x(1)=0$, and then $x\in \mathcal{D}(A)$. Since $x-x''=x_0-x_0''=y$, we have $(I-A)$ is surjective.
For $x\in \mathcal{D}(A)$, suppose that $(I-A)x=0$, then $x(t)=\alpha e^t+\beta e^{-t}$. Notice that $x(0)=x(1)=0$ such that $\alpha=\beta=0$, so $x(t)=0$ for $t\in [0,1]$. So $(I-A)$ is injective. 
It follows from Corollary \ref{contrT} that $A$ is the generator of a  contractive semigroup. By Theorem \ref{posi-semi}, $A$ generates a strongly continuous positive semigroup $T(t)_{t\ge 0}$.


\end{example}

\begin{remark}

Note that in an ordered Banach space, specifically $C^1[0,1]$, it is hard to give the definition of the dispersivity of an operator $A$ with  respect to the original norm, because $C^1[0,1]$ is not a lattice.
 However, by the above discussion, we still have flexibility to choose a function  as in (\ref{itemii}),  which  depends on a function $\phi$ in $X'$.   This is also different from the arguments in \cite[Example 1.5]{Are1986C}.

\end{remark}

\section{Positive off-diagonal property of operators on ordered vector spaces}\label{POD-ovs}

In this section, we will introduce the positive off-diagonal property especially on pre-Riesz spaces, in particular $C^1(\Omega)$ with $\Omega\subseteq \R^n$ open.
Explicitly, we investigate a representation theorem for positive linear functionals on Archimedean pre-Riesz spaces, which is also interesting independently.

\begin{definition}
Let $X$ be an ordered vector space.
A linear operator $A\colon \mathcal{D}(A)\subseteq X \rightarrow X$ is said to have  the \emph{positive off-diagonal property} if $\langle Ax, \phi\rangle\ge 0$ whenever $0\le x\in \mathcal{D}(A)$ and $0\le \phi\in X'$ with $\langle x, \phi \rangle=0$.
\end{definition}

The motivation of the positive off-diagonal property comes from matrix theory, where the off-diagonal elements of the matrix $A=(a_{ij})$ are positive, i.e., $a_{ij}\ge 0$ for all $i\neq j$. It is shown in  \cite[Lemma 7.23]{CleHeiAngDuiPag1987} that on an ordered Banach space $X$ with an order unit $u$ such that $u\in \mathcal{D}(A)$, the operator $A$ has the positive off-diagonal property and $Au\le 0$ if and only if $A$ is $\Psi_u$-dissipative, where $\Psi_u$ is the order unit function, i.e. $\Psi_u(x)=\inf\{\lambda\ge 0\colon x\le \lambda u\}$, $x\in X$.
 However, in general, the properties that $A$ has the positive off-diagonal  property and $A$ is $p$-dissipative for a given $p$ on $X$ do not imply each other, as the following example shows.

\begin{example}\label{POD-dissipa}
Let $X=\R^2$ and $p(x)=\sqrt{x_1^2+x_2^2}$ for $x=(x_1, x_2)$.
Let
$A =\begin{pmatrix}
1 & 1 \\
1 & 1
\end{pmatrix}$, then $A$ has the positive off-diagonal  property. Take $x=(1,0)$, then $x'=(1,0)\in dp(x)$ but $\langle Ax, x'\rangle=1$, so $A$ is not $p$-dissipative. Let $A =\begin{pmatrix}
-1 & -1 \\
1 & 1
\end{pmatrix}$, and $\mathcal{D}(A)=\{x=(x_1, x_2)\in X\colon x_1\ge 0, x_2=0\}$. Take $x'=(1,0)$, then $x'\in dp(x)$ for every $x\in \mathcal{D}(A)$. It is obvious that $\langle Ax, x'\rangle\le 0$. So  $A$ is $p$-dissipative, but does not have positive off-diagonal  property. 
\end{example}

Next, we consider a representation theorem in pre-Riesz spaces.

\begin{theorem}\label{repre-pre-R}
Let $X$ be an Archimedean pre-Riesz space with order unit.
Then there exists a compact Hausdorff space $\Omega$ and a bipositive linear map $i\colon X\rightarrow C(\Omega)$ such that $i(X)$ is order dense in $C(\Omega)$. Moreover, for every positive linear functional $\phi$ on $X$, there exists a regular Borel measure $\mu$ on $\Omega$ such that 
\[\phi(x)=\int_\Omega i(x)(\omega)d\mu(\omega), \quad  x\in X, \ \omega\in \Omega.\]
\end{theorem}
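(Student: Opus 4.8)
The plan is to build the representation in two stages: first produce the space $\Omega$ and the embedding $i$ via the classical Kakutani-type representation of the Riesz completion, then transport a positive functional to $C(\Omega)$ and apply the Riesz representation theorem there. Concretely, since $X$ is an Archimedean pre-Riesz space with order unit $u$, it is directed, so it admits a Riesz completion $X^\rho$ in which $X$ sits as an order dense subspace; moreover $u$ remains an order unit of $X^\rho$ (order density preserves the order unit property, because $u$ dominating elements of $X$ forces it to dominate their infima taken in $X^\rho$). Then $X^\rho$ is an Archimedean Riesz space with order unit, so by the Kakutani representation theorem $X^\rho$ is Riesz (hence lattice and order) isomorphic to an order dense Riesz subspace of $C(\Omega)$ for a compact Hausdorff space $\Omega$ — in fact one can take $\Omega$ to be the set of extreme points of the order interval $[-\psi,\psi]$ in the dual, or the state space construction — with $u$ mapped to the constant function $\mathbf 1$. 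Composing the order dense embedding $X \hookrightarrow X^\rho$ with this isomorphism gives a bipositive linear map $i\colon X \to C(\Omega)$; bipositivity follows because both maps are bipositive, and $i(X)$ is order dense in $C(\Omega)$ because order density is transitive (an order dense subspace of an order dense subspace is order dense, using that infima are computed the same way).

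For the second part, let $\phi$ be a positive linear functional on $X$. The key step is to extend $\phi$ to a positive linear functional $\widetilde\phi$ on $C(\Omega)$ along the embedding $i$. Since $i(X)$ is order dense in $C(\Omega)$, it is in particular majorizing (every $f\in C(\Omega)$ is dominated by some $i(x)$, e.g. by $\|f\|_\infty i(u)$ using that $i(u)=\mathbf 1$), so the Kantorovich extension theorem applies: a positive linear functional on a majorizing subspace of a Riesz space extends to a positive linear functional on the whole space. Concretely, define $\widetilde\phi(f) := \inf\{\phi(x) : x\in X,\ i(x)\ge f\}$ for $f\in C(\Omega)$; one checks this is finite (majorizing), additive and positively homogeneous (standard), hence — being also superadditive by the analogous sup formula — linear, and positive, and it agrees with $\phi$ on $i(X)$ by order density (if $i(x)\ge f = i(x_0)$ then $x\ge x_0$ by bipositivity, so the infimum equals $\phi(x_0)$). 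Then $\widetilde\phi$ is a positive, hence bounded, linear functional on $C(\Omega)$, so by the Riesz representation theorem there is a regular Borel measure $\mu$ on $\Omega$ with $\widetilde\phi(g) = \int_\Omega g\,d\mu$ for all $g\in C(\Omega)$; positivity of $\widetilde\phi$ makes $\mu$ a nonnegative measure. Restricting to $g = i(x)$ gives $\phi(x) = \int_\Omega i(x)(\omega)\,d\mu(\omega)$ for all $x\in X$, as desired.

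The main obstacle — really the only nonroutine point — is verifying that the Kakutani representation of $X^\rho$ can be arranged so that $i(X)$ is genuinely \emph{order dense} in $C(\Omega)$, not merely a majorizing sublattice; this is where one needs that $\Omega$ is chosen correctly (so that $C(\Omega)$ is exactly the "$C(\Omega)$-completion" of $X$ and no extra continuous functions are introduced that would sit strictly below some element of $i(X)$). One should cite the functional-representation results for pre-Riesz spaces with order unit (the van Haandel / Kalauch--van Gaans representation, of which this is the order-unit case), where the relevant $\Omega$ is built from the $w^*$-compact set of normalized positive functionals, and order density is part of the statement. The remaining items — preservation of the order unit under order dense embedding, transitivity of order density, the Kantorovich-type extension, and the Riesz representation theorem — are all standard and can be invoked or dispatched in a line each.
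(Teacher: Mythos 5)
Your proposal is correct and takes essentially the same route as the paper: the paper obtains the embedding $i$ by directly citing the Kalauch--van Gaans representation lemma for Archimedean pre-Riesz spaces with order unit (which you reconstruct via the Riesz completion and the Yosida/Kakutani representation before deferring to the same kind of citation), and your second part --- extending $\phi\circ i^{-1}$ from the majorizing subspace $i(X)$ by the Kantorovich extension theorem and then applying the Riesz representation theorem on $C(\Omega)$ --- is exactly the paper's argument.
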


\begin{proof}
The first part of this theorem follows from \cite[Lemma 6]{KalLemGaa2014}.

For the second part, let $C(\Omega)$ be the space of all continuous functions on $\Omega$,  where $\Omega$ is a compact Housdorff space.
Let $i\colon X\rightarrow C(\Omega)$ be a bipositive linear map such that
 $i(X)$ is an order dense subspace of $C(\Omega)$. 
So for a positive  linear function $\phi\colon X\to \mathbb{R}$,
one has that $\phi\circ i^{-1}\colon i(X)\to \mathbb{R}^+$ is a positive linear function on $i(X)$. 
Since $\mathbb{R}$ is Dedekind complete, and $i(X)$ is a majorizing subspace of $C(\Omega)$, 
by Kantorovich's extension theorem (see \cite[Theorem 1.32]{AliBur1985}), there exists an extension of $\phi\circ i^{-1}$ to a positive function $\psi\colon C(\Omega)\to \mathbb{R}$. 
By the Riesz representation theorem, for $\psi$ on $C(\Omega)$, 
there exists a unique regular Borel measure $\mu$ on $\Omega$ such that 
\[\psi(f)=\int_{\Omega}f(\omega)d\mu(\omega), \quad \forall f\in C(\Omega), \ \omega\in \Omega.\]
So for every $x\in X$, one has $\phi\circ i^{-1}(i(x))=\psi(i(x))$. If we take $f=i(x)$, then
\[\phi(x)=\phi\circ i^{-1}(i(x))=\psi(i(x))=\int_{\Omega}i(x)(\omega)d\mu(\omega).\]
Thus we get the conclusion.
\end{proof}

We give an example to illustrate that the positive off-diagonal property of $A$ can be generalized to a special kind of  ordered vector space, in particular to the pre-Riesz space $C^1[0,1]$. 

\begin{example}
Let $C[0,1]$ be the real continuous functions.
Let $X=C^1[0,1]$ which is an Archimedean pre-Riesz space, then $X$ is  an order dense subspace of 
$C[0,1]$. Let $A\in L(X)$ be a densely defined operator, we claim that $A$ has positive off-diagonal  property if 
and only if $(Ax)(t)\ge 0$ whenever $0\le x\in \mathcal{D}(A)$ and $t\in [0,1]$ with $x(t)=0$.
In fact, first suppose that $A$ has the positive off-diagonal property and $0\le x\in \mathcal{D}(A)$, $t\in [0, 1]$ with $x(t)=0$. 
Take $0\le \delta_t\in X'$ to be the point evaluation at $t$, then it follows from $x(t)=\langle x,\delta_t\rangle=0$
that $\langle Ax, \delta_t\rangle\ge 0$, i.e. $(Ax)(t)\ge 0$.
Conversely, assume $0\le x\in \mathcal{D}(A)$, and $0\le \phi\in X'$ with $\langle x, \phi\rangle=0$. Then Theorem \ref{repre-pre-R} can be applied since $C[0, 1]$ has an order unit, and then there exists a regular Borel measure $\mu$ on $[0, 1]$ that represents $\phi$, i.e. 
$\langle x, \phi\rangle=\int_0^1 i(x)(t)d\mu(t)$.
By assumption, we have $i(x)(t)=0$ and $x(t)=0$ for all $t$ in
 the support of $\mu$, then $(Ax)(t)\ge 0$ and $i(Ax)(t)\ge 0$.
 Hence $\langle Ax,\phi\rangle=\int_0^1 i(Ax)(t)d\mu(t)\ge 0$. 
 This shows that $A$ has the positive off-diagonal  property.
\end{example}

\bibliographystyle{amsplain}

\bibliography{journal}






\end{document}